\renewcommand{\thefootnote}{\fnsymbol{footnote}}
\long\def\sfootnote[#1]#2{\begingroup
\def\thefootnote{\fnsymbol{footnote}}\footnote[#1]{#2}\endgroup}
\newtheorem{theorem}{Theorem}[section]
\newtheorem{corollary}[theorem]{Corollary}
\newenvironment{proof}{\noindent\mbox{\bf Proof.}}
{\hfill\mbox{$\boxtimes\!\!\!\!\boxtimes$}\bigskip}
\begin{document}
\pagestyle{fancy}
\lhead[page \thepage \ (of \pageref{LastPage})]{{\sf Hazhir Homei}}
\chead[{\bf  Uniform Random Sample and Symmetric Beta Distribution}]{{\bf  Uniform Random Sample and Symmetric Beta Distribution}}
\rhead[{\sf Hazhir Homei}]{page \thepage \ (of \pageref{LastPage})}
\lfoot[]{}
\cfoot[]{}
\rfoot[]{}
\renewcommand{\headrulewidth}{0.2pt}
\renewcommand{\footrulewidth}{0pt}
\thispagestyle{empty}

\begin{table}
\begin{center}
\begin{tabular}{| c | l |}
\hline
 \multirow{9}{*}{\includegraphics[scale=0.75]{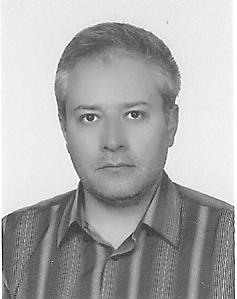}}&    \\
 &   \ \ {\large{\sc Hazhir Homei}}  \ \   \\
 &   \ \ Department of Mathematics \ \  \\
 &     \ \ University of Tabriz \ \ \\
 &    \ \ P.O.Box 51666--17766 \ \ \\
 &   \ \ Tabriz, Iran \ \  \\
 &   \ \    Tel: \, +98 (0)411 339 2863  \ \   \\
  &   \ \ Fax: \ +98 (0)411 334 2102 \ \    \\
   &   \ \ E-mail: {\tt homei}@{\tt tabrizu.ac.ir} \ \    \\
   & \\ 
 \hline
\end{tabular}
\end{center}
\end{table}

\vspace{2em}

\begin{center}
{\bf {\Large Uniform Random Sample and Symmetric Beta Distribution
}}
\end{center}

\vspace{2em}

\begin{abstract}
N.L. Johnson and S. Kotz introduced in 1990 an interesting family of
symmetric distributions which is based on randomly weighted average from
uniform random samples.
The only example that
could be addressed to their work is the so-called ``uniformly
randomly modified tin" distribution from which two random samples
have been computed.
 In this paper, we generalize a subfamily of their symmetric distributions and
 identify a concrete instance of this generalized subfamily. That instance turns out to belong to the family 
 of Johnson and  Kotz,  which had not seemingly received proper attention in the literature.

\bigskip


\centerline{${\backsim\!\backsim\!\backsim\!\backsim\!\backsim\!\backsim\!\backsim\!
\backsim\!\backsim\!\backsim\!\backsim\!\backsim\!\backsim\!\backsim\!
\backsim\!\backsim\!\backsim\!\backsim\!\backsim\!\backsim\!\backsim\!
\backsim\!\backsim\!\backsim\!\backsim\!\backsim\!\backsim\!\backsim\!
\backsim\!\backsim\!\backsim\!\backsim\!\backsim\!\backsim\!\backsim\!
\backsim\!\backsim\!\backsim\!\backsim\!\backsim\!\backsim\!\backsim\!
\backsim\!\backsim\!\backsim\!\backsim\!\backsim\!\backsim\!\backsim\!
\backsim\!\backsim\!\backsim\!\backsim\!\backsim\!\backsim\!\backsim\!
\backsim\!\backsim\!\backsim\!\backsim\!\backsim\!\backsim\!\backsim\!
\backsim\!\backsim\!\backsim\!\backsim\!\backsim\!\backsim\!\backsim\!
\backsim\!\backsim\!\backsim}$}

\bigskip

\noindent {\bf 2010 Mathematics Subject Classification}:    	62E10  $\cdot$    	62E15.

\noindent {\bf Keywords}:   Randomly Weighted Average $\cdot$ Moments Method
  $\cdot$  Uniform Distribution  $\cdot$   Symmetric Beta Distribution.
\end{abstract}

\bigskip

\bigskip

\vfill

\hspace{.75em} \fbox{\textsl{\footnotesize Date: 11    September  2013  (11.09.13)}}

\vfill

\bigskip
\noindent\underline{\centerline{}}
\centerline{page 1 (of \pageref{LastPage})}


\newpage
\setcounter{page}{2}
\SetWatermarkAngle{55}
\SetWatermarkLightness{0.955}
\SetWatermarkScale{2.9}
\SetWatermarkText{\!\!\!\!\!\!\! \copyright\ {\sc Hazhir Homei 2013}}



\section{Introduction}\label{intro}
 The randomly weighted average, and its applications, has been introduced
by N.L. Johnson and S. Kotz for the first time in 1990 as follows:
for independent random variables $X_1,\cdots,X_n$, the random
variable $Z=\sum_{i=1}^{n}W_iX_i$, where $W=\langle
W_1,\ldots,W_n\rangle$ is independent from $X_i$'s and has the
Dirichlet distribution (with $\sum_iW_i=1$), is called the randomly
weighted average of $X_i$'s. Johnson and Kotz (1990) in the
concluding remarks of their paper  mention that the ``case in which
the $X$'s have standard uniform $(0,1)$ distribution leads to an
interesting family of symmetric distributions."  In this paper, a new representation of Beta
distribution is introduced using randomly weighted average of $n$
 independent Beta random variables.
 This family seems not to have received proper
attention in the literature; though we identify a distribution of it which turns out to be the
celebrated Beta distribution.
 Also, it is shown
that the proposed (moments) method of Johnson and Kotz (1990)
for finding distribution of randomly weighted average instead of the
Stieltjes transform (employed first by Van Assche 1987), which has been
strongly recommended by some authors,
still seems to be a simpler and superior method.

\section{A Presentation for Beta Distribution}

The following theorem provides a new presentation for the Beta
distribution.

\begin{theorem}\label{mainthm}
For independent random variables $X_{1},\cdots,X_{r}$ with
  $Beta(n_{1},m_{1}), \cdots, Beta(n_r,m_r)$ distributions and random
   vector $W=\langle W_1,\cdots,W_r\rangle$ with   $Dirichlet(n_1+m_1,\cdots,n_r+m_r)$  distribution,
 the randomly
weighted average $Z=\sum_{j=1}^{r} W_{j}X_{j}$ has
$Beta(\sum_{j=1}^r n_{j},\sum_{j=1}^r m_{j})$ distribution.
\end{theorem}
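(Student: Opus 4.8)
The plan is to follow the moments method advertised in the introduction: compute every integer moment of $Z$ in closed form and check that it coincides with the corresponding moment of $Beta(\sum_j n_j,\sum_j m_j)$, then invoke uniqueness of the moment problem on a bounded interval. Since $Z=\sum_j W_jX_j$ is a convex combination of random variables taking values in $(0,1)$, it is supported in $[0,1]$, so its law is determined by its moment sequence; this reduces the theorem to a moment computation.

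First I would record the two elementary ingredients. Writing $(a)_k=a(a+1)\cdots(a+k-1)$ for the rising factorial, a $Beta(a,b)$ variable has $k$th moment $(a)_k/(a+b)_k$, and the mixed moments of $W\sim Dirichlet(\alpha_1,\dots,\alpha_r)$ are $E\!\left[\prod_j W_j^{k_j}\right]=\left(\prod_j(\alpha_j)_{k_j}\right)\big/(\alpha_0)_{k_1+\cdots+k_r}$ with $\alpha_0=\sum_j\alpha_j$; in our setting $\alpha_j=n_j+m_j$. Next I would expand $Z^k$ by the multinomial theorem and use the independence of $W$ from the $X_j$'s together with the mutual independence of the $X_j$'s to factor the expectation term by term:
\[
E[Z^k]=\sum_{k_1+\cdots+k_r=k}\binom{k}{k_1,\dots,k_r}\,E\!\left[\prod_j W_j^{k_j}\right]\prod_j E[X_j^{k_j}].
\]
Substituting the two formulas, the factors $(n_j+m_j)_{k_j}$ coming from the Dirichlet moment cancel exactly against the denominators $(n_j+m_j)_{k_j}$ of the Beta moments, leaving
\[
E[Z^k]=\frac{1}{\left(\sum_j(n_j+m_j)\right)_k}\sum_{k_1+\cdots+k_r=k}\binom{k}{k_1,\dots,k_r}\prod_j (n_j)_{k_j}.
\]

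The remaining input is the Chu--Vandermonde identity for Pochhammer symbols, $\sum_{k_1+\cdots+k_r=k}\binom{k}{k_1,\dots,k_r}\prod_j(n_j)_{k_j}=\left(\sum_j n_j\right)_k$, which I would verify in one line by comparing the coefficient of $x^k$ on the two sides of $\prod_j(1-x)^{-n_j}=(1-x)^{-\sum_j n_j}$ via the binomial series $(1-x)^{-a}=\sum_{k\ge 0}\frac{(a)_k}{k!}x^k$. This yields $E[Z^k]=\left(\sum_j n_j\right)_k\big/\left(\sum_j n_j+\sum_j m_j\right)_k$, precisely the $k$th moment of $Beta(\sum_j n_j,\sum_j m_j)$, and the determinacy of the Hausdorff moment problem finishes the argument. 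The only step beyond bookkeeping is the combinatorial identity, and even that reduces to matching coefficients of a generating function; the genuine care needed is in tracking the multi-indices and seeing the $(n_j+m_j)_{k_j}$ cancellation, which is what makes the statement work.
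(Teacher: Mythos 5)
Your proposal is correct and follows essentially the same route as the paper: multinomial expansion of $E[Z^k]$, substitution of the Dirichlet and Beta moment formulas, cancellation of the $(n_j+m_j)_{k_j}$ factors, and the multivariate Chu--Vandermonde identity (which the paper phrases as the normalization of the Dirichlet-multinomial distribution, and you prove via the generating function $\prod_j(1-x)^{-n_j}=(1-x)^{-\sum_j n_j}$). Your explicit appeal to determinacy of the Hausdorff moment problem on $[0,1]$ is a welcome addition that the paper leaves implicit.
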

\begin{proof}
We find the $k^{\rm th}$ moment of $Z$ as follows:
$$E({Z}^k)=\sum_{i_1+\cdots+i_r=k}^{}\frac{k!}{{i_1}!\cdots {i_r}!}
E({W_1}^{i_1}\cdots {W_r}^{i_r})E({X_1}^{i_1})\cdots
E({X_r}^{i_r}).$$ By using the dirichlet distribution, we have
$$E(Z^k)=\sum_{i_1+\cdots+i_r=k}^{}\frac{k!}{{i_1}!\cdots {i_r}!}
{\frac{\Gamma(\sum_{j=1}^{r}(n_{j}+m_{j}))\prod_{j=1}^{r}\Gamma(n_{j}+m_{j}+i_{j})}
{\prod_{j=1}^{r}\Gamma(n_{j}+m_{j})\Gamma(\sum_{j=1}^{r}(n_{j}+m_{j}+i_{j}))}}E({X_1}^{i_1})\cdots
E({X_r}^{i_r}).$$ It is well known that
$$E({X_j}^{i_j})=\frac{\Gamma(n_{j}+m_{j})\Gamma(i_{j}+n_{j})}{\Gamma(n_{j})\Gamma(i_{j}+m_{j}+n_{j})},
\textrm{ for } j=1,\cdots,r.$$ So, $E(Z^k)=$
$$\sum_{i_1+\cdots+i_r=k}^{}\frac{k!}{{i_1}!\cdots {i_r}!}
{\frac{\Gamma(\sum_{j=1}^{r}(n_{j}+m_{j}))\prod_{j=1}^{r}\Gamma(n_{j}+m_{j}+i_{j})}
{\prod_{j=1}^{r}\Gamma(n_{j}+m_{j})\Gamma(\sum_{j=1}^{r}(n_{j}+m_{j}+i_{j}))}}
\prod_{j=1}^{r}\frac{\Gamma(n_{j}+m_{j})\Gamma(i_{j}+n_{j})}{\Gamma(n_{j})\Gamma(i_{j}+m_{j}+n_{j})}$$
$$=\frac{\Gamma(\sum_{j=1}^{r}(n_{j}+m_{j}))}{\Gamma(\sum_{j=1}^{r}(n_{j}+m_{j}+i_{j}))}
\sum_{i_1+\cdots+i_r=k}^{}\frac{k!}{{i_1}!\cdots {i_r}!}
\prod_{j=1}^{r}\frac{\Gamma(i_j+n_j)}{\Gamma(n_j)}$$
$$=\frac{\Gamma(\sum_{j=1}^{r}(n_{j}+m_{j}))}{\Gamma(\sum_{j=1}^{r}(n_{j}+m_{j}+i_{j}))}
\sum_{i_1+\cdots+i_r=k}^{}{k\choose i_{1}, i_{2}, \cdots,
i_{r}}\frac{\Gamma(i_1+n_1)}{\Gamma(n_1)}\cdots\frac{\Gamma(i_r+n_r)}{\Gamma(n_r)}.$$
By considering the fact that the sum of the Dirichlet-multimonial
distribution on its support equals to one, we have
$$E(Z^k)=\frac{\Gamma(\sum_{j=1}^{r}(n_{j}+m_{j}))}{\Gamma(\sum_{j=1}^{r}(n_{j}+m_{j}+i_{j}))}
\frac{\Gamma(k+\sum_{j=1}^{r} n_j)}{\Gamma(\sum_{j=1}^{r} n_j)},$$
which is the $k^{\rm th}$ moment of the $Beta(\sum_{j=1}^r
n_{j},\sum_{j=1}^r m_{j})$ distribution, and this proves the
theorem.
\end{proof}

The following corollary  
 exactly specifies (only) one instance of the interesting family introduced by Johnson and Kotz (1990).

\begin{corollary}\label{coro1}
For independent random variables $X_{1},\cdots,X_{r}$ with standard
uniform $(0,1)$ distributions and random vector $W=\langle
W_1,\cdots,W_r\rangle$ with $Dirichlet(2,\cdots,2)$ distribution,
the randomly weighted average $Z=\sum_{j=1}^{r} W_{j}X_{j}$ has
$Beta(r,r)$ distribution.
\end{corollary}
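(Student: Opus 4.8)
The plan is to obtain Corollary \ref{coro1} as an immediate specialization of Theorem \ref{mainthm}. The single fact that makes this work is the elementary identification $Beta(1,1) = \text{Uniform}(0,1)$: the $Beta(1,1)$ density is $\frac{\Gamma(2)}{\Gamma(1)\Gamma(1)}\,x^{0}(1-x)^{0} = 1$ on $(0,1)$. So I would begin by recording this identification, so that the standard uniform variables $X_{1},\dots,X_{r}$ appearing in the corollary are exactly $Beta(n_{j},m_{j})$ variables with $n_{j} = m_{j} = 1$ for every $j$.

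With that in hand, the second step is pure bookkeeping. Substituting $n_{j} = m_{j} = 1$ into Theorem \ref{mainthm}, the required weight vector has distribution $Dirichlet(n_{1}+m_{1},\dots,n_{r}+m_{r}) = Dirichlet(2,\dots,2)$, which is precisely the hypothesis of the corollary, and the conclusion becomes that $Z = \sum_{j=1}^{r} W_{j}X_{j}$ has distribution $Beta\big(\sum_{j=1}^{r} n_{j}, \sum_{j=1}^{r} m_{j}\big) = Beta(r,r)$. That is the assertion, so the corollary follows at once.

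Since this is a direct consequence of an already-proved theorem, there is no real obstacle; the only point worth double-checking is the boundary behaviour of the Gamma-function formulas at $n_{j} = m_{j} = 1$, where $\Gamma(1) = 1$ is used repeatedly and no factor degenerates. As a self-contained sanity check one can also rerun the moment argument of Theorem \ref{mainthm} directly in this case: here $E(X_{j}^{\,i_{j}}) = \frac{1}{i_{j}+1} = \frac{\Gamma(2)\,\Gamma(i_{j}+1)}{\Gamma(1)\,\Gamma(i_{j}+2)}$, and the Dirichlet-multinomial summation collapses exactly as before to give $E(Z^{k}) = \frac{\Gamma(2r)\,\Gamma(k+r)}{\Gamma(r)\,\Gamma(k+2r)}$, which is the $k^{\text{th}}$ moment of $Beta(r,r)$; since a distribution supported on the bounded interval $[0,1]$ is determined by its moments, this pins down the law of $Z$. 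In the write-up I would simply cite Theorem \ref{mainthm} and note the specialization, relegating the moment recomputation to at most a parenthetical remark.
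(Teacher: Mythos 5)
Your proposal is correct and matches the paper's proof exactly: the paper also obtains the corollary by setting $n_j=m_j=1$ in Theorem~\ref{mainthm}, using the identification of the standard uniform with $Beta(1,1)$. The extra sanity checks you include are fine but unnecessary.
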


\begin{proof}
In Theorem~\ref{mainthm} put $n_j=m_j=1$ for $j=1,\cdots,r$.
\end{proof}

In some papers such as Roozegar and Soltani (2013) it is mentioned that the ``distribution of
[randomly weighted average] cannot be directly specified", and ``[i]t is realized that
Stieltjes transform (ST) is more appropriate".
In fact, the following corollary of Theorem~\ref{mainthm} contains the main result of Roozegar and Soltani (2013),
noting that the $R_i$'s defined by them as $R_{i}=U_{(i)}-U_{(i-1)}$ and $R_n= 1-\sum_{i=1}^{n-1} R_i$ where
$U_{(1)},...,U_{(n-1)}$ are order statistics of a random sample
$U_1,...,U_n$ from a uniform
distribution on [0,1] with $U_{(0)}=0$ and $U_{(n)}=1$, have  $Dirichlet(1,\cdots,1)$ distribution.

\begin{corollary}\label{coro1}
For independent random variables $X_1,\cdots,X_r$ with 
common Arcsin distribution on $(-a,a)$ and random vector $W=\langle
W_1,\cdots,W_r\rangle$ with $Dirichlet(1,\cdots,1)$ distribution,
the randomly weighted average $Z=\sum_{j=1}^{r} W_{j}X_{j}$ has a
power semicircle distribution on $(-a,a)$ with
$\lambda=\frac{r-1}{2}$; i.e., 
$$f(x;\lambda)=(\frac{1}{\sqrt \pi a^{2\lambda}})\frac{\Gamma(\lambda+1)}
{\Gamma(\lambda+\frac{1}{2})}(a^2-x^2)^{\lambda-\frac{1}{2}}\quad \text{\rm for } \;|x|<a.$$
\end{corollary}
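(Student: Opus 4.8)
The plan is to deduce this corollary from Theorem~\ref{mainthm} by the same device used in the first corollary: recognize the Arcsin and power semicircle laws as affinely rescaled symmetric Beta distributions. First I would record the standard fact that if $Y\sim Beta(\alpha,\alpha)$ then $2aY-a$ is supported on $(-a,a)$ with density proportional to $(a^2-x^2)^{\alpha-1}$; in particular $\alpha=\tfrac12$ gives the Arcsin law on $(-a,a)$ and $\alpha=\lambda+\tfrac12$ gives exactly the power semicircle density $f(x;\lambda)$ displayed in the statement (one checks the normalizing constant via the Beta integral $\int_{-a}^a (a^2-x^2)^{\lambda-1/2}\,dx = a^{2\lambda}\sqrt\pi\,\Gamma(\lambda+\tfrac12)/\Gamma(\lambda+1)$).

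Next I would pass from the symmetric, $(-a,a)$-supported variables to $(0,1)$-supported Beta variables by the change of variables $X_j = 2aX_j'-a$, where $X_j'\sim Beta(\tfrac12,\tfrac12)$ are independent. The key algebraic point is that the randomly weighted average commutes with this common affine map: since $\sum_j W_j=1$ almost surely,
\[
Z=\sum_{j=1}^r W_jX_j=\sum_{j=1}^r W_j(2aX_j'-a)=2a\Bigl(\sum_{j=1}^r W_jX_j'\Bigr)-a=2aZ'-a,
\]
with $Z'=\sum_j W_jX_j'$. Now $W\sim Dirichlet(1,\dots,1)$ is exactly $Dirichlet(n_1+m_1,\dots,n_r+m_r)$ with $n_j=m_j=\tfrac12$, so Theorem~\ref{mainthm} applies verbatim and gives $Z'\sim Beta(\sum n_j,\sum m_j)=Beta(\tfrac r2,\tfrac r2)$.

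It remains to translate $Z'\sim Beta(\tfrac r2,\tfrac r2)$ back through $Z=2aZ'-a$ and match it to $f(x;\lambda)$. Setting $\alpha=\tfrac r2$ and $\lambda=\alpha-\tfrac12=\tfrac{r-1}{2}$, the rescaled density is $C(a^2-x^2)^{\alpha-1}=C(a^2-x^2)^{\lambda-1/2}$ on $(-a,a)$, and inserting the normalizing constant from the Beta integral above yields precisely the stated $f(x;\lambda)$; this also confirms $\lambda=\tfrac{r-1}{2}$. I do not anticipate a serious obstacle — the argument is a reduction to Theorem~\ref{mainthm} together with bookkeeping of the affine change of variables; the only place that needs a little care is verifying the normalizing constant of the power semicircle density, i.e.\ that the Gamma-function constant in $f(x;\lambda)$ is exactly what the Beta$(\tfrac r2,\tfrac r2)$ density produces after the substitution $x=2ay-a$.
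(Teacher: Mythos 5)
Your proposal is correct and follows essentially the same route as the paper: apply Theorem~\ref{mainthm} with $n_j=m_j=\tfrac12$ (so $W\sim Dirichlet(1,\dots,1)$ and $Z'\sim Beta(\tfrac r2,\tfrac r2)$) and transfer the result through the affine map $x\mapsto 2ax-a$, which commutes with the weighted average because $\sum_j W_j=1$. The paper states this in two lines; your version merely spells out the normalizing-constant check that the paper leaves implicit.
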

\begin{proof}
In Theorem~\ref{mainthm} put $n_j=m_j=\frac{1}{2}$ for
$j=1,\cdots,r$. Clearly, $2aZ-a=\sum_{j=1}^{r}
W_{j}(2aX_{j}-a)$ and $(2aX_j-a)$'s  have arcsin distributions.
\end{proof}

\section{Conclusions}
The method of this article, which is a continuation of the method of
Johnson and Kotz (1990), provides an elementary and direct way for
computing the distribution of randomly weighted averages of certain
random variables. Our Theorem~\ref{mainthm} provided one specific
example of the interesting family of symmetric distributions
foreseen in Johnson and Kotz (1990). 
 So, this goes to say that the more elementary and much simpler method of Johnson
and Kotz (1990) can still be a powerful technique.

\end{document}